\documentclass[11pt]{article}

\usepackage[a4paper,margin=1.08in]{geometry}
\usepackage{amsmath,amssymb,amsthm,mathtools,bbm}
\usepackage{microtype}
\usepackage{cite}
\usepackage[hidelinks]{hyperref}
\hypersetup{
  pdftitle={A Fourier-hypercontractive proof of the sharp bound for cancellative pairs},
  pdfauthor={Fan Chang},
  pdfkeywords={cancellative pairs, one-sided noise operator, hypercontractivity, biased Fourier analysis, Sinkhorn scaling}
}

\allowdisplaybreaks

\newtheorem{theorem}{Theorem}[section]
\newtheorem{lemma}[theorem]{Lemma}

\theoremstyle{remark}
\newtheorem{remark}[theorem]{Remark}

\newcommand{\E}{\mathbb{E}}
\newcommand{\T}{\mathrm{T}}

\newcommand{\1}{\mathbf{1}}

\newcommand{\A}{\mathcal{A}}
\newcommand{\B}{\mathcal{B}}

\newcommand\tup[1]{\left\langle #1 \right\rangle}

\title{A hypercontractive proof of the sharp bound for cancellative pairs}
\author{Fan Chang\thanks{\raggedright School of Statistics and Data Science, Nankai University, Tianjin, China. Email:~\texttt{1120230060@mail.nankai.edu.cn}. Supported by the National Natural Science Foundation of China under grant 124B2019.}}
\date{}

\begin{document}

\maketitle

\begin{abstract}
Fang and Huang~\cite{FangHuang2026} proved that every cancellative pair $(\mathcal{A},\mathcal{B})$ of families of subsets of $[n]$ satisfies $|\mathcal{A}|\cdot|\mathcal{B}|\leq(\frac{9}{4})^n$. Their proof uses entropy. We give a Fourier-analytic proof based on Lifshitz's one-sided noise operator $\mathrm{T}^{1/4\to1/2}$~\cite{Lifshitz2020}. The main tool is a near-$L^1$ hypercontractive estimate for this operator.
\end{abstract}

\section{Introduction}

A pair $(\mathcal A,\mathcal B)$ of families of subsets of $[n]$ is \emph{cancellative} if $A\cup B=A'\cup B$ implies $A=A'$ for all $A,A'\in\mathcal A$ and $B\in\mathcal B$, and $A\cup B=A\cup B'$ implies $B=B'$ for all $A\in\mathcal A$ and $B,B'\in\mathcal B$.

Cancellative pairs were introduced by Holzman and K\"orner~\cite{HolzmanKorner1995}. They proved $|\mathcal A|\cdot|\mathcal B|\leq2.3264^n$, and Janzer~\cite{Janzer2018} improved the constant to $2.2682$. Tolhuizen~\cite{Tolhuizen2000} constructed symmetric cancellative pairs with product size $(\frac{9}{4}-o(1))^n$. Very recently, Fang and Huang~\cite{FangHuang2026} proved the matching upper bound.

\begin{theorem}[Fang--Huang]\label{thm:main}
If $(\A,\B)$ is a cancellative pair of families of subsets of $[n]$, then
$$
|\A|\cdot|\B|\leq\left(\frac94\right)^n.
$$
\end{theorem}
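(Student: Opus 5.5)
We will prove Theorem~\ref{thm:main} by a Fourier/noise argument with Lifshitz's one-sided noise operator $\T=\T^{1/4\to1/2}$. For $f\colon\{0,1\}^n\to\mathbb{R}$, $\T f(x)=\E[f(y)]$, where $y\supseteq x$ is obtained by keeping every coordinate of $x$ that equals $1$ and turning each $0$-coordinate into $1$ independently with the appropriate probability. The probability is chosen so that $\T$ maps functions on the $\mu_{1/2}$-cube to functions on the $\mu_{1/4}$-cube: if $x\sim\mu_{1/4}$ then $y\sim\mu_{1/2}$. The reason for this operator is that if $A\sim\mu_{1/2}$ and $B\sim\mu_{1/2}$ are independent, then $A\cup B\sim\mu_{3/4}$. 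Passing to complements, intersections of two half-density sets are $\mu_{1/4}$-distributed, and the quantity $(9/4)^n=(3/4)^n\cdot 3^n$ is exactly what one should expect. Set $f=\1_{\A}$ and $g=\1_{\B}$. Let $N$ be the number of pairs $(A,B)\in\A\times\B$, i.e.\ $|\A||\B|$.

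The first step is to use cancellativity to get a pointwise packing inequality. Fix a set $U$. Cancellativity says the map $(A,B)\mapsto A\cup B$ restricted to $\{A\cup B=U\}$ is injective in each coordinate. Hence the pairs with union $U$ form a partial matching between $\A\cap 2^U$ and $\B\cap 2^U$. Write $\A_U=\A\cap 2^U$ and $\B_U=\B\cap 2^U$. Then
\[
\#\{(A,B):A\cup B=U\}\le\min\bigl(|\A_U|,|\B_U|\bigr)\le\sqrt{|\A_U||\B_U|}.
\]
Summing over $U$ with weights, we write $|\A||\B|=\sum_U\#\{A\cup B=U\}$. Normalizing, $|\A||\B|/4^n$ equals the probability that a uniformly random $A$ and an independent uniform $B$ lie in the families. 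We then bound this by
\[
\E_{U\sim\mu_{3/4}}\Bigl[\min\bigl(\Pr[A\in\A\mid A\subseteq U],\Pr[B\in\B\mid B\subseteq U]\bigr)\cdot(\text{density factor})\Bigr].
\]
This is essentially an expression of the form $\E_{x\sim\mu_{1/4}}[\min(\T f,\T g)\cdot w]$ after complementing. Here $\T f(x)$ is the conditional density of $\A$ above a random superset, with the exact operator chosen so that this identity holds.

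The second step is the analytic estimate. Using $\min(a,b)\le a^{\theta}b^{1-\theta}$ with $\theta=1/2$ and then Hölder, we reduce to bounding $\|\T f\|_{L^{p}(\mu_{1/4})}$ for $p$ slightly above $1$ against $\|f\|_{L^1(\mu_{1/2})}$. We also need a multiplicative correction from the density factor. Concretely, the target is a near-$L^1$ hypercontractive inequality
\[
\|\T f\|_{L^{1+\varepsilon}(\mu_{1/4})}\le\|f\|_{L^{q}(\mu_{1/2})}
\]
with $q=q(\varepsilon)\to1$ at the correct first-order rate. By tensorization, such an inequality reduces to the one-dimensional two-point inequality, which is a calculus check. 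Raising to the optimized exponents and letting $\varepsilon\to0$ should produce the constant $(9/4)^n$. The sharpness of Tolhuizen's construction indicates that the one-dimensional inequality must be tight at first order. We therefore plan to differentiate the $n=1$ inequality at $\varepsilon=0$; this is an entropy-type inequality, which explains the link to the Fang--Huang entropy proof.

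The main obstacle is this near-$L^1$ estimate with the sharp derivative. Ordinary hypercontractivity of $\T$ gives some $q<p$, but not the exact relation between $q'(1)$ and the density factor needed to cancel the $3^n$ loss. We first try to prove the one-dimensional inequality directly for all functions on $\{0,1\}$, where it is a two-variable inequality, and then tensorize. If it fails for general $f$, we plan to symmetrize, or to rescale $\A$ and $\B$ by Sinkhorn-type weights so that the marginal conditional densities are balanced. This should allow the inequality to be applied only in the regime where it holds, namely when the $\min$ is nearly achieved by both sides.
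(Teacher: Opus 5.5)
\paragraph{The gap: the first step loses too much.} No later analytic estimate can repair it. Once you replace $|\A|\cdot|\B|=\sum_U\#\{(A,B):A\cup B=U\}$ by $\sum_U\min(|\A_U|,|\B_U|)$ (or by $\sum_U\sqrt{|\A_U||\B_U|}$), cancellativity is never used again. The steps $\min(x,y)\le\sqrt{xy}$, H\"older, and any hypercontractive bound for $\T\1_{\A}$, $\T\1_{\B}$ hold for arbitrary families. So your scheme would prove $|\A|\cdot|\B|\le(9/4)^n$ for every pair satisfying $|\A|\cdot|\B|\le\sum_U\min(|\A_U|,|\B_U|)$, and that is false.

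Take $\A=\B=\binom{[n]}{k}$ with $k=n/5$. Then $\sum_U|\A_U|=\sum_{A\in\A}2^{n-|A|}=\binom nk2^{n-k}$. Since $\binom nk\le2^{h(1/5)n}<2^{0.73n}\le2^{n-k}$, the inequality holds. Yet $|\A|\cdot|\B|=\binom{n}{n/5}^2=2^{(2h(1/5)-o(1))n}\approx2.72^n$. The bound $\min(|\A_U|,|\B_U|)$ forgets which $A$ is paired with which $B$, and that their union is exactly $U$ rather than a subset of it; this costs an exponential factor. Your fallback of rebalancing the $\min$ with Sinkhorn-type weights targets the analytic step, not this loss.

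\paragraph{What the paper does instead.} The missing idea is to keep the pairing inside a test function whose norm cancellativity computes exactly.
\begin{itemize}
\item The paper Sinkhorn-scales the kernel $3^{-|A\cup B|}$ to weights $r_A,s_B$ with row sums $1/a$ and column sums $1/b$.
\item For each fixed $A$ it sets $h_A(A\cup B)=(ar_As_B)^{1/q}$, and $h_A=0$ elsewhere.
\item Injectivity of $B\mapsto A\cup B$ gives $\|h_A\|_{q,1/4}^q=(3/4)^n$ exactly. Meanwhile $\T h_A(C)=2^{-|C|}(ar_A)^{1/q}\mathbbm{1}_{\{A\subseteq C\}}\sum_{B\in\B,\,B\subseteq C}s_B^{1/q}$ factorizes.
\item Here $\T f(C)=2^{-|C|}\sum_{W\subseteq C}f(W)$ is the downward operator from the $\mu_{1/4}$-cube to the $\mu_{1/2}$-cube.
\item The key estimate is $\|\T f\|_{1+\varepsilon,1/2}\le\|f\|_{1+\gamma\varepsilon,1/4}$ for $\gamma>1/2$. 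It is proved by a one-bit calculation at the sharp constant $1/2$ and then tensorized.
\item Row and column bounds are combined by Cauchy--Schwarz, a power-sum lemma, and H\"older against $\sum_{A,B}r_As_B3^{-|A\cup B|}=1$. This gives $|\A|\cdot|\B|\le\bigl((1+2^{\gamma(1+\varepsilon)/(1-\gamma)})/2\bigr)^{2n}$, which tends to $(9/4)^n$.
\end{itemize}

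\paragraph{Secondary issues.} Your operator is the upward adjoint, and your target $L^q(\mu_{1/2})\to L^{1+\varepsilon}(\mu_{1/4})$ is a different inequality. By duality it is a near-$L^\infty$ bound for the paper's operator. Your quantity $\Pr[A\in\A\mid A\subseteq U]$ is the downward average $\T\1_{\A}(U)$. In your complemented picture it needs turning probability $1/2$, which maps $\mu_{1/4}$ to $\mu_{5/8}$, not to $\mu_{1/2}$. Finally, the key inequality and the exponent bookkeeping that should produce $9/4$ are left unproved.
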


Fang and Huang apply Sinkhorn's theorem to the matrix $3^{-|A\cup B|}$ and interpret the scaled matrix as a coupling with uniform marginals on $\A$ and $\B$. They then enlarge the random union by a binary $Z$-channel. The two random sets are conditionally independent given the enlarged set, and a one-coordinate entropy inequality completes the proof.

One motivation for this work is to understand the elegant entropy proof of Fang and Huang: which steps depend on cancellativity, and which entropy-based tools can be replaced. Fourier-analytic and information-theoretic proofs on product spaces often suggest two formulations of the same inequality. Examples include Gross's equivalence between hypercontractivity and logarithmic Sobolev inequalities~\cite{Gross1975}, and Friedgut's information-theoretic proof of a hypercontractive inequality~\cite{Friedgut2024}. For a fixed joint distribution, hypercontractivity has equivalent formulations in terms of relative entropy and mutual information, with close connections to strong data-processing and $\Phi$-Sobolev inequalities~\cite{AhlswedeGacs1976,AnantharamEtAl2013,Nair2014,NairWang2016,Raginsky2016,BeigiGohari2018}. More generally, Carlen and Cordero-Erausquin~\cite{CarlenCorderoErausquin2009} proved a duality between Brascamp--Lieb inequalities and entropy subadditivity, including best constants and equality cases, and Liu, Courtade, Cuff, and Verd\'u~\cite{LiuEtAl2018} extended this duality to forward--reverse inequalities with stochastic maps. Yu~\cite{Yu2021} has developed this approach further, deriving nonlinear Brascamp--Lieb and hypercontractive inequalities by information-theoretic and coupling methods; related work with Anantharam and Chen~\cite{YuAnantharamChen2024} gives stronger hypercontractive inequalities. The monograph of Yu and Tan~\cite[Part~III]{YuTan2022} summarizes these connections from both the information-theoretic and Fourier-analytic viewpoints. In the present problem, we retain the Sinkhorn scaling from Fang and Huang's proof and replace the subsequent entropy argument by a near-$L^1$ hypercontractive estimate and standard H\"older/Cauchy--Schwarz inequalities.

The operator we use is Lifshitz's one-sided noise operator $\T^{1/4\to1/2}$~\cite{Lifshitz2020}. Related operators between differently biased cubes appeared in the work of Ahlberg, Broman, Griffiths, and Morris on noise sensitivity in continuum percolation~\cite{AhlbergEtAl2014}. Lifshitz used them in robust sharp threshold arguments. We follow the notation $\T^{p\to q}$ of Kalai and Lifshitz~\cite{KalaiLifshitz2025}, who use the thinning operator to compare Shapley values and threshold intervals.

Given $T\subseteq[n]$, retain each element of $T$ independently with probability $1/2$, and denote the resulting subset by $W$. Then
$$
\T^{1/4\to1/2}f(T)=\E[f(W)\mid T]
=\frac{1}{2^{|T|}}\sum_{W\subseteq T}f(W).
$$
Let $\mu_\alpha$ denote the $\alpha$-biased product measure on $\{0,1\}^n$. For $r\geq1$, write $\|f\|_{r,\alpha}=(\E_{\mu_\alpha}|f|^r)^{1/r}$. The following near-$L^1$ estimate is the main tool of the proof.

\begin{theorem}[Near-$L^1$ hypercontractivity]\label{thm:hyper}
For every $1/2<\gamma<1$, there exists $\varepsilon_0=\varepsilon_0(\gamma)>0$ such that the following holds. If $0<\varepsilon<\varepsilon_0$, $p=1+\varepsilon$, and $q=1+\gamma\varepsilon$, then
\begin{equation}
\|\T^{1/4\to1/2} f\|_{p,1/2}\leq\|f\|_{q,1/4}
\end{equation}
for every $n\geq1$ and every $f:\{0,1\}^n\to\mathbb R$.
\end{theorem}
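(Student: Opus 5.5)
The plan is to prove the one-coordinate case ($n=1$) and then tensorize. For the tensorization, view $\T^{1/4\to1/2}$ on $\{0,1\}^n$ as the $n$-fold tensor product of the one-coordinate operator $\T_1$, mapping $L^q(\mu_{1/4})$ to $L^p(\mu_{1/2})$. Since $p\geq q$, the standard argument applies. Apply $\T_1$ in the first coordinate with the others frozen, using the one-dimensional bound. Then use Minkowski's integral inequality in the form $\bigl\|\,\|g\|_{L^q_x}\bigr\|_{L^p_y}\leq\bigl\|\,\|g\|_{L^p_y}\bigr\|_{L^q_x}$, valid for $p\geq q$, to swap the order of the norms. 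Then induct on $n$. This shows that the operator norm is multiplicative, so it suffices to show that the $n=1$ norm is at most $1$.

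For $n=1$, since $|\T_1 f|\leq \T_1|f|$, we may assume $f\geq0$. Write $f(0)=a$ and $f(1)=b$, so that $\T_1 f(0)=a$ and $\T_1 f(1)=(a+b)/2$. We must show
\[
\Bigl(\tfrac12 a^p+\tfrac12\bigl(\tfrac{a+b}{2}\bigr)^p\Bigr)^{1/p}\leq\Bigl(\tfrac34 a^q+\tfrac14 b^q\Bigr)^{1/q}.
\]
Note that $\E_{\mu_{1/2}}\T_1 f=\tfrac34a+\tfrac14b=\E_{\mu_{1/4}}f$, so at $\varepsilon=0$ the two sides are equal. Expand to first order in $\varepsilon$, using $\frac{d}{dr}\log\|g\|_r\big|_{r=1}=\mathrm{Ent}(g)/\E g$. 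With $p-1=\varepsilon$ and $q-1=\gamma\varepsilon$, the first-order inequality becomes the entropy contraction
\[
\mathrm{Ent}_{1/2}(\T_1 f)\leq \gamma\,\mathrm{Ent}_{1/4}(f).
\]
I would prove the sharper bound $\mathrm{Ent}_{1/2}(\T_1 f)\leq\tfrac12\mathrm{Ent}_{1/4}(f)$ for all $f\geq0$. This is a one-variable calculus inequality: normalize to $f=(1,t)$ with $t\in[0,\infty)$, or to $f=(s,1)$ with $s\in[0,1]$ to handle the range near $t=\infty$. Equality is approached only in the limit $f=\1_{\{x=1\}}$, where the ratio is exactly $1/2$, which explains the hypothesis $\gamma>1/2$. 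Near constants the ratio of variances is $1/3$, so there the entropy ratio is also strictly below $1/2$.

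The main obstacle is making the $\varepsilon$-expansion uniform in $f$, so that a single $\varepsilon_0(\gamma)$ works. Define
\[
\Phi_\varepsilon(f)=\frac1\varepsilon\bigl(\log\|f\|_{q,1/4}-\log\|\T_1 f\|_{p,1/2}\bigr),
\]
which tends to $\Phi_0(f)=\bigl(\gamma\,\mathrm{Ent}(f)-\mathrm{Ent}(\T_1 f)\bigr)/\E f$. By the previous step, $\Phi_0(f)\geq(\gamma-\tfrac12)\mathrm{Ent}(f)/\E f$. I would bound the Taylor remainder in $\varepsilon$ by $C\varepsilon\cdot\mathrm{Ent}(f)/\E f$ uniformly over the compact parameter set $\{(1,t):t\in[0,1]\}\cup\{(s,1):s\in[0,1]\}$. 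This needs care at two places:
\begin{itemize}
\item Near the constant function, both $\Phi_0$ and the remainder are quadratic in $t-1$. Comparing $\Phi_\varepsilon$ there with the variance gives uniformity, since the second derivative of $\log\|g\|_r$ in $r$ is controlled by $\mathrm{Var}(g)$.
\item Near the endpoints $t=0$ or $s=0$, the functions $a^p$ and $a^q$ have derivatives that blow up. There I would instead check the inequality directly: for $f=(0,1)$ both sides are explicit, and a perturbation in $s$ is monotone for small $\varepsilon$.
\end{itemize}
Choosing $\varepsilon_0$ so that $C\varepsilon_0<\gamma-\tfrac12$ then gives $\Phi_\varepsilon\geq0$, which completes the $n=1$ case, and tensorization finishes the proof.
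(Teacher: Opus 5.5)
Your proposal is correct and follows the paper's route: a one-bit estimate obtained from the first-order entropy contraction $\mathrm{Ent}_{1/2}(\T_1 f)\le\frac12\mathrm{Ent}_{1/4}(f)$ (equality only at constants and at $f=\1_{\{x=1\}}$, which is why $\gamma>1/2$ is needed), then compactness to get a uniform $\varepsilon_0$, then tensorization by Minkowski with $p\ge q$. The paper normalizes $\E_{\mu_{1/4}}f=1$, so the parameter $t\in[-\frac13,1]$ is automatically compact, and it proves the calculus inequality you leave implicit by showing that $F=8D_{1/2}$ is concave on $[-\frac13,\frac13]$ and convex on $[\frac13,1]$ with $F(1)=0$; your separate treatment of the endpoints (including the unverified monotonicity claim in $s$) is unnecessary, since $x^r\log^k x$ extends continuously to $x=0$ for $r$ near $1$, so the $\varepsilon$-remainder is uniformly controlled there.
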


\begin{remark}\label{rem:quarter-bias}
The bias $1/4$ comes from both the probabilistic definition of $W$ and the weight assigned to the union.
Indeed, let $T\sim\mu_{1/2}$ and, conditional on $T$, obtain
$W\subseteq T$ by retaining each element of $T$ independently with
probability $1/2$. For each coordinate $i$,
$$
\mathbb{P}(i\in W)=\mathbb{P}(i\in T)\mathbb P(i\in W\mid i\in T)=\frac14,
$$
and the coordinates of $W$ remain independent. Hence $W\sim\mu_{1/4}$,
and consequently
$$
\E_{\mu_{1/2}}[\T^{1/4\to1/2} f] =\E_{\mu_{1/4}}[f].
$$
Thus $\T^{1/4\to1/2}$ maps functions from the
$1/4$-biased cube to the uniform cube.

The same bias arises from the weight $3^{-|W|}$ that will be used
later for the union $W=A\cup B$. Indeed,
$$
\mu_{1/4}(W)=\left(\frac14\right)^{|W|}
\left(\frac34\right)^{n-|W|}=\left(\frac34\right)^n\frac{1}{3^{|W|}}.
$$
More generally, a product measure $\mu_\alpha$ has density proportional
to $3^{-|W|}$ precisely when
$\frac{\alpha}{1-\alpha}=\frac{1}{3}$, which gives $\alpha=\frac{1}{4}$.

There is also a direct combinatorial reason for using this operator. Fix $A\in\mathcal A$. By cancellativity, the map $B\mapsto A\cup B$ is injective on $\mathcal B$. If $h_A$ is supported
on these unions, then
$$
(\T^{1/4\to1/2} h_A)(T)=\frac{1}{2^{|T|}}\sum_{\substack{B\in\mathcal B\\ A\cup B\subseteq T}}
h_A(A\cup B)=\frac{1}{2^{|T|}}\mathbbm{1}_{A\subseteq T}\sum_{\substack{B\in\mathcal B\\ B\subseteq T}}
h_A(A\cup B),
$$
where the second equality follows from $A\cup B\subseteq T$ if and only if $A\subseteq T$ and $B\subseteq T$. Thus averaging over the subsets of $T$ converts a condition on the union $A\cup B$ into separate containment conditions on $A$ and $B$. This is the form needed for the row and column estimates in the proof.
\end{remark}
\paragraph{Proof overview.}
The proof idea may be summarized as follows: we use cancellativity to construct suitable test functions, and then apply Theorem~\ref{thm:hyper} to these functions. One side of the resulting inequalities is evaluated by Sinkhorn's theorem (Theorem~\ref{thm:sinkhorn}), while the other is converted into the desired product bound by Cauchy--Schwarz and H\"older inequalities.

Write $a=|\A|$ and $b=|\B|$. The proof begins with the observation of Holzman and K\"orner~\cite{HolzmanKorner1995}: cancellativity is needed only to ensure that $B\mapsto A\cup B$ is injective for each fixed $A$, and that $A\mapsto A\cup B$ is injective for each fixed $B$. These maps allow us to define the row and column test functions on the union variable. The same two injectivity statements are also used in~\cite{Janzer2018,FangHuang2026}.

Sinkhorn's theorem balances the kernel $3^{-|A\cup B|}$: there are positive weights $r_A,s_B$ such that
$$
\sum_{B\in\mathcal B}r_As_B3^{-|A\cup B|}=\frac1a,
\qquad
\sum_{A\in\mathcal A}r_As_B3^{-|A\cup B|}=\frac1b.
$$
For each $A\in\A$, define a test function supported on the sets
$A\cup B$, with value $(ar_As_B)^{1/q}$ at $A\cup B$. The row normalization
determines its $L^q(\mu_{1/4})$ norm. Applying Theorem~\ref{thm:hyper} and summing over $A$ gives a row estimate;
interchanging $\A$ and $\B$ gives the corresponding column estimate. Cauchy--Schwarz combines the two, while
Lemma~\ref{lem:powers} and a weighted H\"older inequality reduce the
remaining expression to the Sinkhorn normalization. Letting $\varepsilon\downarrow0$ and then $\gamma\downarrow1/2$ makes the one-coordinate factor tend to $3/2$, yielding the bound $(9/4)^n$.

\medskip
\noindent\emph{Organization.}
This paper is organized as follows. Section~\ref{sec:preliminaries} reviews biased Fourier analysis and the one-sided noise operator, and records the power-sum inequality and Sinkhorn's theorem used in the proof. Section~\ref{sec:proof} proves Theorems~\ref{thm:hyper} and~\ref{thm:main}. Section~\ref{sec:remarks} compares our argument with the entropy proof of Fang and Huang.

\section{Preliminaries}\label{sec:preliminaries}

\subsection{Fourier analysis on the \texorpdfstring{$p$}{p}-biased cube}

We identify subsets $A\subseteq[n]$ with their indicator vectors $\1_A\in\{0,1\}^n$, where $(\1_A)_i=1$ if and only if $i\in A$. Under this identification, we write $f(A)$ for $f(\1_A)$. For $0<p<1$, the $p$-biased measure $\mu_p$ on $\{0,1\}^n$ is
\[
\mu_p(x)=p^{|x|}(1-p)^{n-|x|}.
\]
For $r\geq1$, write $\|f\|_{r,p}=(\mathbb E_{\mu_p}|f|^r)^{1/r}$ and $\tup{f,g}_p=\mathbb E_{\mu_p}[fg]$.

For $i\in[n]$, define
\[
\chi_i^p(x)=\frac{x_i-p}{\sqrt{p(1-p)}},
\]
and for $S\subseteq[n]$, let $\chi_S^p(x)=\prod_{i\in S}\chi_i^p(x)$, with $\chi_\varnothing^p=1$. The functions $\{\chi_S^p:S\subseteq[n]\}$ form an orthonormal basis of $L^2(\{0,1\}^n,\mu_p)$. Thus
$$
f=\sum_{S\subseteq[n]}\hat{f}_p(S)\chi_S^p,\qquad
\hat{f}_p(S)=\tup{f,\chi_S^p}_p,
$$
and Parseval's identity reads $\|f\|_{2,p}^2=\sum_S\hat{f}_p(S)^2$. We refer to~\cite{ODonnell2014} for background.

\subsection{The one-sided noise operator}

Let $0<\alpha<\beta<1$. Given $x\in\{0,1\}^n$, form $y\leq x$ by retaining each coordinate with $x_i=1$ independently with probability $\frac{\alpha}{\beta}$. The one-sided noise operator is
$$
\T_n^{\alpha\to\beta}f(x)=\mathbb E[f(y)\mid x]=\sum_{y\leq x}\left(\frac\alpha\beta\right)^{|y|}
\left(\frac{\beta-\alpha}{\beta}\right)^{|x|-|y|}f(y).
$$
If $x\sim\mu_\beta$, then $y\sim\mu_\alpha$, and hence
$$
\E_{\mu_\beta}[\T^{\alpha\to\beta}f]
=\E_{\mu_\alpha}[f].
$$
The construction is coordinatewise, so the operator tensorizes. For $g:\{0,1\}\to\mathbb R$,
\begin{equation}\label{eq:one-bit-operator}
(\T_1^{\alpha\to\beta}g)(0)=g(0),
\qquad
(\T_1^{\alpha\to\beta}g)(1)=\frac{\beta-\alpha}{\beta}g(0)+\frac{\alpha}{\beta}g(1).
\end{equation}
For $i\in[n]$, let $\T^{(i)}$ apply this rule in the $i$th coordinate and leave the other coordinates fixed. The coordinate operators commute, and
\begin{equation}\label{eq:operator-tensorization}
\T_n^{\alpha\to\beta}=\T^{(1)}\cdots\T^{(n)}=\bigl(\T_1^{\alpha\to\beta}\bigr)^{\otimes n}.
\end{equation}
For example, if $f(x)=\prod_{i=1}^n f_i(x_i)$, then
$$
(\T_n^{\alpha\to\beta}f)(x)
=\prod_{i=1}^n(\T_1^{\alpha\to\beta}f_i)(x_i).
$$
We will use this coordinatewise factorization in the Fourier calculation below and in the proof of Theorem~\ref{thm:hyper}.

The usual noise operator is self-adjoint on a fixed biased cube. In contrast, $\T^{\alpha\to\beta}$ averages over subsets and maps functions from $(\{0,1\}^n,\mu_\alpha)$ to functions on $(\{0,1\}^n,\mu_\beta)$. It acts diagonally between the two biased Fourier bases~\cite{Lifshitz2020,KalaiLifshitz2025}.

\begin{lemma}\label{lem:fourier}
Let $0<\alpha<\beta<1$ and set $\rho=\sqrt{\frac{\alpha(1-\beta)}{\beta(1-\alpha)}}$. If $f=\sum_{S\subseteq[n]}\hat{f}_\alpha(S)\chi_S^\alpha$, then
$$
\T_n^{\alpha\to\beta}f
=\sum_{S\subseteq[n]}\rho^{|S|}\hat{f}_\alpha(S)\chi_S^\beta.
$$
\end{lemma}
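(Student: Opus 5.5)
By linearity it is enough to check the identity on each basis function $\chi_S^\alpha$. Since $\chi_S^\alpha=\prod_{i\in S}\chi_i^\alpha(x_i)$ is a product of one-coordinate functions, the factorization \eqref{eq:operator-tensorization} gives
$$
\T_n^{\alpha\to\beta}\chi_S^\alpha(x)=\prod_{i\in S}(\T_1^{\alpha\to\beta}\chi^\alpha)(x_i)\prod_{i\notin S}(\T_1^{\alpha\to\beta}1)(x_i),
$$
where $\chi^\alpha(t)=(t-\alpha)/\sqrt{\alpha(1-\alpha)}$ is the one-bit character. The proof therefore reduces to two one-bit computations. First, $\T_1^{\alpha\to\beta}1=1$, because the weights $\frac{\beta-\alpha}{\beta}+\frac{\alpha}{\beta}$ in \eqref{eq:one-bit-operator} sum to $1$. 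Second, and this is the only real step, $\T_1^{\alpha\to\beta}\chi^\alpha=\rho\,\chi^\beta$.

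For the second identity we evaluate \eqref{eq:one-bit-operator} at both points. At $0$,
$$
(\T_1^{\alpha\to\beta}\chi^\alpha)(0)=\chi^\alpha(0)=-\sqrt{\tfrac{\alpha}{1-\alpha}}.
$$
At $1$,
$$
(\T_1^{\alpha\to\beta}\chi^\alpha)(1)=\frac{\beta-\alpha}{\beta}\chi^\alpha(0)+\frac{\alpha}{\beta}\chi^\alpha(1)
=\frac{1}{\beta\sqrt{\alpha(1-\alpha)}}\bigl(-(\beta-\alpha)\alpha+\alpha(1-\alpha)\bigr)
=\frac{\alpha(1-\beta)}{\beta\sqrt{\alpha(1-\alpha)}}
=\frac{\sqrt{\alpha}\,(1-\beta)}{\beta\sqrt{1-\alpha}}.
$$
The target values are $\rho\chi^\beta(0)=-\rho\sqrt{\beta/(1-\beta)}$ and $\rho\chi^\beta(1)=\rho\sqrt{(1-\beta)/\beta}$. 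Substituting $\rho=\sqrt{\alpha(1-\beta)/(\beta(1-\alpha))}$ gives $-\sqrt{\alpha/(1-\alpha)}$ and $\sqrt{\alpha}(1-\beta)/(\beta\sqrt{1-\alpha})$ respectively, which match the two values above.

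A quicker consistency check is also available. A function on $\{0,1\}$ is affine, so $\T_1^{\alpha\to\beta}\chi^\alpha=c\,\chi^\beta+d$ for some constants. Taking the $\mu_\beta$-expectation and using $\E_{\mu_\beta}\T f=\E_{\mu_\alpha}f=0$ forces $d=0$. The constant $c$ is then read off from the value at $0$, which gives $c=\rho$.

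Multiplying the one-bit identities over all coordinates yields $\T_n^{\alpha\to\beta}\chi_S^\alpha=\rho^{|S|}\chi_S^\beta$. Summing against the coefficients $\hat f_\alpha(S)$ gives the lemma. There is no real obstacle here: the only care needed is the algebra in the one-bit check, and using the mean-preservation argument removes half of it.
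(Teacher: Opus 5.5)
Your proposal is correct and follows the paper's proof: check $\T_1^{\alpha\to\beta}1=1$ and $\T_1^{\alpha\to\beta}\chi^\alpha=\rho\,\chi^\beta$ on one bit, tensorize via \eqref{eq:operator-tensorization}, and extend by linearity. Your explicit evaluation at $0$ and $1$, and the mean-preservation shortcut, simply fill in the ``direct calculation'' that the paper leaves to the reader.
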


\begin{proof}
In one coordinate, $\T_1^{\alpha\to\beta}1=1$ and a direct calculation from~\eqref{eq:one-bit-operator} gives
$$
\T_1^{\alpha\to\beta}\chi_1^\alpha
=\rho\chi_1^\beta.
$$
Then tensorization gives
$$
\T_n^{\alpha\to\beta}\chi_S^\alpha
=\rho^{|S|}\chi_S^\beta.
$$
The claimed expansion follows by linearity.
\end{proof}

We henceforth suppress the subscript $n$ when the dimension is clear. For $\T=\T^{1/4\to1/2}$ and $A\subseteq[n]$, we have
\begin{equation}
  \T f(A)=\frac{1}{2^{|A|}}\sum_{W\subseteq A}f(W)
\end{equation}
and
\begin{equation}
\T\chi_S^{1/4}=\left(\frac{1}{\sqrt{3}}\right)^{|S|}\chi_S^{1/2}.
\end{equation}
The adjoint of $\T$ is described by the reverse procedure. Start from $A\sim\mu_{1/4}$, keep every element of $A$, and add each element outside $A$ independently with probability $1/3$. The resulting set has law $\mu_{1/2}$. This is the enlargement used in~\cite{FangHuang2026}; we return to its relation with $\T$ in Section~\ref{sec:remarks}.

\subsection{A power-sum inequality}
The next elementary inequality will be used when the row and column estimates are combined.

\begin{lemma}\label{lem:powers}
Let $1<q<p$ be such that $\frac{p(q-1)}{p-q}\geq1$. Then for every finite sequence of nonnegative numbers $z_1,\ldots,z_N$,
\begin{equation}
\left(\sum_{i=1}^Nz_i^{\frac{1}{q}}\right)^{\frac{p}{2}}\left(\sum_{i=1}^Nz_i^{\frac{p}{q}}\right)^{\frac{1}{2}}\geq\left(\sum_{i=1}^Nz_i\right)^{\frac{p}{q}}.
\end{equation}
\end{lemma}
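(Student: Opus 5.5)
The inequality is homogeneous: both sides have degree $p/q$ in $z$. The plan is to substitute $x_i=z_i^{1/q}$, normalize, and reduce everything to the log-convexity of power sums (Lyapunov's inequality). The hypothesis $\frac{p(q-1)}{p-q}\geq1$ will turn out to be exactly the exponent condition that makes the final comparison work.

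\textbf{Step 1: substitution and normalization.} With $x_i=z_i^{1/q}\geq0$ the claim becomes
$$
\Bigl(\sum_i x_i\Bigr)^{p/2}\Bigl(\sum_i x_i^{p}\Bigr)^{1/2}\geq\Bigl(\sum_i x_i^{q}\Bigr)^{p/q}.
$$
Both sides are now homogeneous of degree $p$ in $x$. If all $x_i=0$ there is nothing to prove. Otherwise we rescale so that $\sum_i x_i=1$. Then $0\le x_i\le1$, and since $p>1$ this gives $t:=\sum_i x_i^p\le 1$.

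\textbf{Step 2: Lyapunov interpolation.} Since $1<q<p$, write $q=\lambda\cdot1+\mu\cdot p$ with $\lambda=\frac{p-q}{p-1}$ and $\mu=\frac{q-1}{p-1}$, so that $\lambda+\mu=1$. Hölder's inequality with exponents $1/\lambda$ and $1/\mu$, applied to $x_i^q=x_i^{\lambda}\cdot x_i^{\mu p}$, gives
$$
\sum_i x_i^q\le\Bigl(\sum_i x_i\Bigr)^{\lambda}\Bigl(\sum_i x_i^p\Bigr)^{\mu}=t^{\mu}.
$$
Raising to the power $p/q$ yields $(\sum_i x_i^q)^{p/q}\le t^{\mu p/q}$.

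\textbf{Step 3: compare exponents.} The left-hand side equals $t^{1/2}$ after normalization. Because $t\le1$, the required bound $t^{\mu p/q}\le t^{1/2}$ holds as soon as $\frac{\mu p}{q}=\frac{p(q-1)}{q(p-1)}\geq\frac12$. This condition is equivalent to $2p(q-1)\ge q(p-1)$, that is, $pq\ge 2p-q$. The hypothesis $p(q-1)\ge p-q$ rearranges to the same inequality $pq\ge 2p-q$, which completes the argument.

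The only delicate point is matching the exponent condition in Step 3 to the stated hypothesis, and the algebra above shows they coincide. The rest is the routine Hölder/Lyapunov step.
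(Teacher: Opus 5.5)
Your proof is correct and is essentially the paper's argument: your H\"older step is exactly the three-point log-convexity of $t\mapsto\log\sum_i z_i^t$ at $1/q<1<p/q$, combined with the fact that the highest power sum is at most $1$ after normalization. The only difference is cosmetic. You normalize $\sum_i z_i^{1/q}=1$ (i.e.\ $\sum_i x_i=1$) rather than $\sum_i z_i=1$, so the hypothesis enters as $\frac{p(q-1)}{q(p-1)}\ge\frac12$ instead of through the paper's slope comparison, and you correctly checked that the two conditions are equivalent.
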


\begin{proof}
If all $z_i$ vanish, the inequality is trivial. By homogeneity, we may
assume that $\sum_{i=1}^Nz_i=1$. Let $\varphi(t)=\log\left(\sum_{i=1}^N z_i^t\right)$. By H\"older's inequality, for $s,t>0$ and $0\le \alpha\le1$,
$$
\sum_{i=1}^N z_i^{(1-\alpha)s+\alpha t}
\le\left(\sum_{i=1}^N z_i^s\right)^{1-\alpha}\left(\sum_{i=1}^N z_i^t\right)^\alpha.
$$
Thus $\varphi$ is convex. Since $\frac{1}{q}<1<\frac{p}{q}$ and $\varphi(1)=0$, convexity gives
$$
\frac{\varphi(1)-\varphi(\frac{1}{q})}{1-\frac{1}{q}}\le \frac{\varphi(\frac{p}{q})-\varphi(1)}{\frac{p}{q}-1}\Rightarrow  \frac{\varphi(\frac{1}{q})}{1-\frac{1}{q}}\ge \frac{-\varphi(\frac{p}{q})}{\frac{p}{q}-1}.
$$
Moreover, $\varphi(\frac{p}{q})\le0$, because $\frac{p}{q}>1$ and $\sum_i z_i=1$. Hence
$$
p\varphi\left(\frac{1}{q}\right)+\varphi\left(\frac{p}{q}\right)\ge \left(1-\frac{p(q-1)}{p-q}\right)\varphi\left(\frac{p}{q}\right)\ge0.
$$
Exponentiating one half of this inequality yields
$$
\left(\sum_{i=1}^Nz_i^{\frac{1}{q}}\right)^{\frac{p}{2}}\left(\sum_{i=1}^Nz_i^{\frac{p}{q}}\right)^{\frac{1}{2}}\ge1,
$$
which is the desired inequality under the normalization $\sum_i z_i=1$.
\end{proof}

\subsection{Sinkhorn's theorem}

Sinkhorn scaling says that a positive matrix can be balanced by multiplying each row and each column by a positive scalar. We need the rectangular form in which the balanced matrix has uniform row and column marginals~\cite{Sinkhorn1964,Sinkhorn1967}.
\begin{theorem}[Sinkhorn]\label{thm:sinkhorn}
Let $K=(K_{ij})$ be a positive $m\times k$ matrix. There exist positive numbers $r_1,\ldots,r_m$ and $s_1,\ldots,s_k$ such that the scaled matrix $P_{ij}=r_iK_{ij}s_j$ satisfies
\begin{itemize}
    \item $\sum_{j=1}^k P_{ij}=\frac1m$ for every $i\in[m]$;
    \item $\sum_{i=1}^m P_{ij}=\frac1k$ for every $j\in[k]$.
\end{itemize}
\end{theorem}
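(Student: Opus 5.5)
We prove Theorem~\ref{thm:sinkhorn} variationally: the scaling factors will be exponentials of a minimizer of a convex function whose first-order conditions are exactly the two marginal equations. For $u\in\mathbb R^m$ and $v\in\mathbb R^k$ define
$$
F(u,v)=\sum_{i=1}^m\sum_{j=1}^k K_{ij}e^{u_i+v_j}-\frac1m\sum_{i=1}^m u_i-\frac1k\sum_{j=1}^k v_j .
$$
Since $\partial F/\partial u_i=\sum_j K_{ij}e^{u_i+v_j}-\frac1m$ and $\partial F/\partial v_j=\sum_i K_{ij}e^{u_i+v_j}-\frac1k$, a critical point $(u,v)$ gives $P_{ij}=r_iK_{ij}s_j$ with $r_i=e^{u_i}>0$ and $s_j=e^{v_j}>0$ having the required row sums $\frac1m$ and column sums $\frac1k$. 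So it suffices to show that $F$ attains its minimum on $\mathbb R^{m+k}$; no convexity is needed for that, since any interior minimizer of a smooth function is a critical point.

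The main step, and the only real obstacle, is coercivity. $F$ is invariant under $(u,v)\mapsto(u+c\mathbf 1,v-c\mathbf 1)$: the exponential terms are unchanged and the linear term changes by $-c+c=0$. So $F$ cannot be coercive on all of $\mathbb R^{m+k}$, and we restrict to the hyperplane $H=\{v_1=0\}$, which meets every orbit. Let $\kappa=\min_{i,j}K_{ij}>0$. Using $\frac1m\sum_i u_i+\frac1k\sum_j v_j=\frac1{mk}\sum_{i,j}(u_i+v_j)$, we get
$$
F(u,v)\ge\sum_{i,j}g(u_i+v_j),\qquad g(t)=\kappa e^t-\frac{t}{mk}.
$$
The function $g$ is bounded below by some constant $-C$ and $g(t)\to\infty$ as $|t|\to\infty$. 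Hence on any sublevel set $\{F\le M\}$ every quantity $u_i+v_j$ satisfies $g(u_i+v_j)\le M+mkC$, so it lies in a fixed bounded interval. On $H$, taking $j=1$ bounds each $u_i$, and then taking $i=1$ bounds each $v_j$. Thus the sublevel sets of $F|_H$ are closed and bounded, hence compact.

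By continuity, $F|_H$ attains a minimum at some $(u^*,v^*)$. By the invariance, this point also minimizes $F$ on all of $\mathbb R^{m+k}$: every point is equivalent to one in $H$ with the same value of $F$. Therefore $\nabla F(u^*,v^*)=0$, and the first paragraph yields the required positive $r_i,s_j$. The construction needs the compatibility $\sum_i\frac1m=1=\sum_j\frac1k$. This compatibility is exactly what makes the linear term invariant under the shift, which in turn is what let us reduce to $H$.
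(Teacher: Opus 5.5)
Your proof is correct and is essentially the paper's argument: the paper minimizes $\Psi(y)=\frac1m\sum_i\log\bigl(\sum_j K_{ij}e^{y_j}\bigr)-\frac1k\sum_j y_j$ on the hyperplane $\{\sum_j y_j=0\}$, and this $\Psi$ is exactly your $F$ with the $u$-variables minimized out (indeed $\min_u F(u,v)=1+\log m+\Psi(v)$); the paper then defines $r_i$ explicitly from the row equations. Your joint formulation, the normalization $v_1=0$, and the entrywise coercivity bound via $g$ are only cosmetic variations of the same proof: a translation invariance, coercivity coming from $\min K_{ij}>0$, and reading the marginal equations off the first-order conditions.
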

In particular, $\sum_{i,j}P_{ij}=1$, so $P$ may be viewed as a probability distribution whose two marginals are uniform.
\begin{proof}
Consider
$$
\Psi(y_1,\ldots,y_k)
=\frac1m\sum_{i=1}^m\log\left(\sum_{j=1}^kK_{ij}e^{y_j}\right)-\frac1k\sum_{j=1}^ky_j.
$$
The function is invariant under adding the same constant to every $y_j$, so restrict it to the hyperplane $\sum_jy_j=0$. On this hyperplane,
\[
\Psi(y)\geq\log\left(\min_{i,j}K_{ij}\right)+\max_jy_j,
\]
and hence $\Psi$ is coercive. Let $y$ be a minimizer. The first-order condition gives
\[
\frac1m\sum_{i=1}^m
\frac{K_{ij}e^{y_j}}{\sum_{\ell=1}^kK_{i\ell}e^{y_\ell}}
=\frac1k
\quad\text{for every }j.
\]
Set $s_j=e^{y_j}$ and
\[
r_i=\frac1{m\sum_{\ell=1}^kK_{i\ell}s_\ell}.
\]
The row identities follow from the definition of $r_i$, and the first-order condition gives the column identities.
\end{proof}

For the union kernel $K_{A,B}=3^{-|A\cup B|}$, the scaled entries
$$
P_{A,B}=r_As_B3^{-|A\cup B|}
$$
form a probability distribution with uniform marginals on $\A$ and $\B$. The row identities normalize the row test functions, and the column identities normalize the column test functions. Thus the same weights $r_A,s_B$ can be used in both estimates before they are combined.

\section{Proofs}\label{sec:proof}

\subsection{Proof of Theorem~\ref{thm:hyper}}
The standard proof of the Bonami--Beckner hypercontractive inequality on the discrete cube has two steps: one proves a two-point inequality in dimension one and then tensorizes it to the full cube by Minkowski's inequality; see~\cite{Bonami1970,Beckner1975} and~\cite[Sections~9.3--9.4]{ODonnell2014}. Our argument follows the same tensorization argument. 

\begin{proof}[Proof of Theorem~\ref{thm:hyper}]
Write $\T=\T^{1/4\to1/2}$. We first prove the estimate on $\{0,1\}$. Since $\T$ is positive, $|\T f|\leq \T|f|$, so it suffices to take $f\geq0$. The case $f=0$ is immediate; otherwise, by homogeneity, assume $\E_{\mu_{1/4}}[f]=1$. Then
$$
f_t(0)=1-t,\qquad f_t(1)=1+3t
$$
for some $-\frac{1}{3}\leq t\leq1$. Moreover,
$$
\T f_t(0)=1-t, \qquad \T f_t(1)=1+t.
$$

Let $p=1+\varepsilon$ and $q=1+\gamma\varepsilon$, and define
$$
\Delta_\varepsilon(t)=\log\|\T f_t\|_{p,1/2}-\log\|f_t\|_{q,1/4}.
$$
At $\varepsilon=0$, both norms are equal to $1$. Differentiation at $\varepsilon=0$ gives
$$
\lim_{\varepsilon\downarrow0}\frac{\Delta_\varepsilon(t)}{\varepsilon}
=D_\gamma(t),
$$
where, with $0\log0=0$,
$$
D_\gamma(t)=\frac12(1-t)\log(1-t)+\frac12(1+t)\log(1+t)-\gamma\left[\frac34(1-t)\log(1-t)
+\frac14(1+3t)\log(1+3t)\right].
$$
We claim that $D_{1/2}(t)\leq0$ on $[-\frac{1}{3},1]$. Indeed, let $F(t)=8D_{1/2}(t)$. Then
$$
F(t)=(1-t)\log(1-t)+4(1+t)\log(1+t)
-(1+3t)\log(1+3t),
$$
and
$$
F'(t)=\log\frac{(1+t)^4}{(1-t)(1+3t)^3},
\qquad
F''(t)=\frac{-4(3t-1)}{(t-1)(t+1)(3t+1)}.
$$
Note that the function $F$ is concave on $[-\frac{1}{3},\frac{1}{3}]$. Since $F(0)=F'(0)=0$, its graph lies below the tangent line at $0$, and hence $F\leq0$ on this interval. On $[\frac{1}{3},1]$, the function is convex, while $F(\frac{1}{3})<0$ and $F(1)=0$. It therefore lies below the chord joining these two endpoint values, so $F\leq0$ there as well.

For $\gamma>1/2$,
$$
D_\gamma(t)=D_{1/2}(t)-\left(\gamma-\frac12\right)\left[\frac34(1-t)\log(1-t)+\frac14(1+3t)\log(1+3t)\right].
$$
Write $J(t)=\frac34(1-t)\log(1-t)
+\frac14(1+3t)\log(1+3t)$. Since $\frac34(1-t)+\frac14(1+3t)=1$, the strict convexity of $x\mapsto x\log x$ gives
$$
J(t)\ge\left[\frac34(1-t)+\frac14(1+3t)\right]\log\left[\frac34(1-t)+\frac14(1+3t)\right]=0.
$$
Equality holds only when $1-t=1+3t$, that is, $t=0$. Consequently, for $t\ne0$,
$$
D_\gamma(t)<D_{1/2}(t)\le0.
$$

It remains to pass from the derivative at $\varepsilon=0$ to a norm inequality for small positive $\varepsilon$. Define
$$
\Phi(\varepsilon,t)=
\begin{cases}
\Delta_\varepsilon(t)/\varepsilon,&\varepsilon>0,\\
D_\gamma(t),&\varepsilon=0.
\end{cases}
$$
For some $\varepsilon_1>0$, the function $\Phi$ is continuous on $[0,\varepsilon_1]\times[-\frac{1}{3},1]$. This follows from the continuous extensions of $x^r$ and $x^r\log x$ at $x=0$, for $r$ in a neighbourhood of $1$. The first two derivatives of $\Phi$ with respect to $t$ are continuous near $(0,0)$. For every $\varepsilon>0$,
$$
\Phi(\varepsilon,0)=\partial_t\Phi(\varepsilon,0)=0,
\qquad
\partial_t^2\Phi(\varepsilon,0)=1-3\gamma<0.
$$
After decreasing $\varepsilon_1$, the function $t\mapsto\Phi(\varepsilon,t)$ is concave on a fixed neighbourhood of $0$ and is nonpositive there. On the complement of that neighbourhood, $D_\gamma$ has a strictly negative maximum. Uniform continuity then gives $\Phi(\varepsilon,t)<0$ for all sufficiently small $\varepsilon$. This proves the one-dimensional estimate.

The $n$-dimensional estimate now follows by induction. For $n>1$, write $x=(x',x_n)$, let $\T^{(n)}$ act on the last coordinate, and set $g=\T^{(n)}f$. Since $\T_n=\T_{n-1}\T^{(n)}$, the induction hypothesis in the first $n-1$ coordinates, Minkowski's inequality, and the one-bit estimate in the last coordinate give
\begin{equation*}
    \begin{split}
\|\T_n f\|_{p,1/2}&=\left(\underset{x_n\sim\mu_{1/2}}{\E}\left[\|\T_{n-1}g(\,\cdot,x_n)\|_{p,1/2}^{p}\right]\right)^{1/p}\leq\left(\underset{x_n\sim\mu_{1/2}}{\E}\left[\|g(\,\cdot,x_n)\|_{q,1/4}^{p}\right]\right)^{1/p}\\
&\leq\left(\underset{x'\sim\mu_{1/4}^{\otimes(n-1)}}{\E}\left[\|g(x',\cdot)\|_{p,1/2}^{q}\right]\right)^{1/q}\leq\left(\underset{x'\sim\mu_{1/4}^{\otimes(n-1)}}{\E}\left[  \|f(x',\cdot)\|_{q,1/4}^{q}\right]\right)^{1/q}=\|f\|_{q,1/4}.
\end{split}
\end{equation*}
The second inequality is Minkowski in the form $L^p(L^q)\leq L^q(L^p)$ for $p\geq q$. This completes the induction.
\end{proof}

\subsection{Proof of Theorem~\ref{thm:main}}
We now apply Theorem~\ref{thm:hyper} to the test functions defined by the Sinkhorn-scaled union kernel.
\begin{proof}[Proof of Theorem~\ref{thm:main}]
Let $a=|\A|$ and $b=|\B|$. The conclusion is immediate if either family is empty, so assume $a,b>0$.
Apply Theorem~\ref{thm:sinkhorn} to the matrix $K_{A,B}=3^{-|A\cup B|}$ where $A\in \A$ and $B\in \B$. There are positive weights $r_A$ and $s_B$ such that
$$
\sum_{B\in\mathcal B}\frac{r_A\cdot s_B}{3^{|A\cup B|}}=\frac1a, \qquad
\sum_{A\in\mathcal A}\frac{r_A\cdot s_B}{3^{|A\cup B|}}=\frac1b.
$$
In particular,
\begin{equation}\label{eq:total}
\sum_{A\in\A}\sum_{B\in \B}\frac{r_A\cdot s_B}{3^{|A\cup B|}}=1.
\end{equation}

Fix $\frac{1}{2}<\gamma<1$ and choose $\varepsilon>0$ sufficiently small for Theorem~\ref{thm:hyper}. Fix $A\in\A$. Since $(\A,\B)$ is cancellative, the map $B\mapsto A\cup B$ is injective. Define $h_A:\{0,1\}^n\to[0,\infty)$ by
$$
h_A(U)=\begin{cases}
(ar_As_B)^{\frac{1}{q}},&U=A\cup B\text{ for some }B\in\mathcal B,\\
0,&\text{otherwise}.
\end{cases}
$$
Using the row normalization and $\mu_{1/4}(U)=(\frac{3}{4})^n\frac{1}{3^{|U|}}$, we obtain
$$
\|h_A\|_{q,1/4}^q=a\left(\frac34\right)^n\sum_{B\in\mathcal B}\frac{r_A\cdot s_B}{3^{|A\cup B|}}=\left(\frac34\right)^n.
$$
Moreover,
$$
\T h_A(C)=\frac{1}{2^{|C|}}\sum_{U\subseteq C}h_A(U)=\frac{1}{2^{|C|}}(ar_A)^{\frac{1}{q}}\mathbbm{1}_{\{A\subseteq C\}}
\sum_{\substack{B\in\mathcal B\\B\subseteq C}}s_B^{\frac{1}{q}}.
$$
Applying Theorem~\ref{thm:hyper} to $h_A$ with $p=1+\varepsilon$ and $q=1+\gamma\varepsilon$, and raising both sides to the $p$th power, gives
$$
\underset{C\sim\mu_{1/2}}{\E}
\left[\frac{1}{2^{p|C|}}(ar_A)^{\frac{p}{q}}\mathbbm{1}_{\{A\subseteq C\}}
\left(\sum_{\substack{B\in\mathcal B\\B\subseteq C}}s_B^{\frac{1}{q}}\right)^p
\right]
\leq\left(\frac34\right)^{n\frac{p}{q}}.
$$
Summing over $A\in\mathcal A$ and dividing by $a^{\frac{p}{q}}$ yields
\begin{equation}\label{eq:rows}
\underset{C\sim\mu_{1/2}}{\E}\left[
\frac{1}{2^{p|C|}}\left(\sum_{\substack{B\in\mathcal B\\B\subseteq C}}s_B^{\frac{1}{q}}\right)^p
\sum_{\substack{A\in\mathcal A\\A\subseteq C}}r_A^{\frac{p}{q}}
\right]
\leq a^{1-\frac{p}{q}}\left(\frac34\right)^{n\frac{p}{q}}.
\end{equation}

Interchanging $\mathcal A$ and $\mathcal B$, and using the injectivity of $A\mapsto A\cup B$ together with the column normalization, gives
\begin{equation}\label{eq:columns}
\underset{C\sim\mu_{1/2}}{\E}
\left[\frac{1}{2^{p|C|}}\left(\sum_{\substack{A\in\mathcal A\\A\subseteq C}}r_A^{\frac{1}{q}}\right)^p\sum_{\substack{B\in\mathcal B\\B\subseteq C}}s_B^{\frac{p}{q}}\right]
\leq b^{1-\frac{p}{q}}\left(\frac34\right)^{n\frac{p}{q}}.
\end{equation}

For each $C\subseteq[n]$, define $F(C)=2^{-p|C|}
\left(\sum_{\substack{B\in\mathcal B\\B\subseteq C}}s_B^{1/q}\right)^p
\sum_{\substack{A\in\mathcal A\\A\subseteq C}}r_A^{\frac{p}{q}}$ and define $G(C)$ by interchanging $\A$ and $\B$ and their weights. Multiplying~\eqref{eq:rows} and~\eqref{eq:columns} and applying Cauchy--Schwarz to $\sqrt{F}$ and $\sqrt{G}$ gives
\begin{equation}\label{eq:product-FG}
\left(\mathbb E_{\mu_{1/2}}\left[\sqrt{FG}\right]\right)^2\le\E_{\mu_{1/2}}[F]\E_{\mu_{1/2}}[G]
\leq(ab)^{1-\frac{p}{q}}\left(\frac34\right)^{2n\frac{p}{q}}.
\end{equation}
For a fixed $C$, note that
\begin{equation}\label{eq:CS}
    \begin{split}
\sqrt{F(C)G(C)}&=\frac{1}{2^{p|C|}}\left(\sum_{\substack{A\in\mathcal A\\A\subseteq C}}r_A^{\frac{1}{q}}\right)^{\frac{p}{2}}
\left(\sum_{\substack{A\in\mathcal A\\A\subseteq C}}r_A^{\frac{p}{q}}\right)^{\frac{1}{2}}\times
\left(\sum_{\substack{B\in\mathcal B\\B\subseteq C}}s_B^{\frac{1}{q}}\right)^{\frac{p}{2}}
\left(\sum_{\substack{B\in\mathcal B\\B\subseteq C}}s_B^{\frac{p}{q}}\right)^{\frac{1}{2}}\\
&\geq \frac{1}{2^{p|C|}} \left(\sum_{\substack{A\in\mathcal A\\A\subseteq C}}r_A\right)^{\frac{p}{q}}\left(\sum_{\substack{B\in\mathcal B\\B\subseteq C}}s_B\right)^{\frac{p}{q}},
    \end{split}
\end{equation}
where we apply Lemma~\ref{lem:powers} to the sequences $\{r_A:A\in\A,\ A\subseteq C\}$ and $\{s_B:B\in\B,\ B\subseteq C\}$ (The hypothesis of Lemma 2.2 holds since $\frac{p(q-1)}{p-q}=\frac{\gamma(1+\varepsilon)}{1-\gamma}>1$). Let
$$
Z(C)=
\left(\sum_{\substack{A\in\mathcal A\\A\subseteq C}}r_A\right)
\left(\sum_{\substack{B\in\mathcal B\\B\subseteq C}}s_B\right).
$$
Combining \eqref{eq:product-FG} and \eqref{eq:CS}, we obtain
\begin{equation}\label{eq:cs-bound}
(ab)^{1-\frac{p}{q}}\left(\frac34\right)^{2n\frac{p}{q}}
\geq\left(\E_{\mu_{1/2}}\left[\sqrt{FG}\right]\right)^2\geq\left(
\underset{C\sim\mu_{1/2}}{\E}
\left[\frac{1}{2^{p|C|}}Z(C)^\frac{p}{q}\right]
\right)^2.
\end{equation}

It remains to bound the last expectation. A $\mu_{1/3}$-random set contains a fixed set $U=A\cup B$ with probability $3^{-|U|}$. Hence \eqref{eq:total} implies
$$
\sum_{C\subseteq[n]}\mu_{1/3}(C)Z(C)=\sum_{A\in\mathcal A}\sum_{B\in\mathcal B}r_As_B
\sum_{C\supseteq A\cup B}\mu_{1/3}(C)=\sum_{A\in\A}\sum_{B\in \B}\frac{r_A\cdot s_B}{3^{|A\cup B|}}=1.
$$
For each $C$, split the summand as
$$
\mu_{1/3}(C)Z(C)
=\left[2^{\frac{-n-p|C|}{p/q}}Z(C)\right]\cdot\left[\mu_{1/3}(C)2^{\frac{n+p|C|}{p/q}}\right].
$$
H\"older's inequality with conjugate exponents $\frac{p}{q}$ and $\frac{\frac{p}{q}}{\frac{p}{q}-1}$ gives
\begin{equation*}
    \begin{split}
1&=\sum_{C\subseteq[n]}\mu_{1/3}(C)Z(C)=\sum_{C\subseteq[n]}\left[2^{\frac{-n-p|C|}{p/q}}Z(C)\right]\cdot\left[\mu_{1/3}(C)2^{\frac{n+p|C|}{p/q}}\right]\\
&\le\left(\sum_{C\subseteq[n]}2^{-n-p|C|}Z(C)^\frac{p}{q}\right)^{\frac{q}{p}}\cdot \left(\sum_{C\subseteq[n]}
\mu_{1/3}(C)^{\frac{\frac{p}{q}}{\frac{p}{q}-1}}2^{\frac{n+p|C|}{\frac{p}{q}-1}}\right)^{\frac{p/q-1}{p/q}}\\
&=\left(\underset{C\sim\mu_{1/2}}{\E}\left[\frac{Z(C)^\frac{p}{q}}{2^{p|C|}}\right]\right)^{\frac{q}{p}}\cdot \left(\sum_{C\subseteq[n]}\left(\frac{1}{3}\right)^{\frac{\frac{p}{q}|C|}{\frac{p}{q}-1}}\left(\frac{2}{3}\right)^{\frac{\frac{p}{q}(n-|C|)}{\frac{p}{q}-1}}2^{\frac{(p+1)|C|}{\frac{p}{q}-1}}2^{\frac{n-|C|}{\frac{p}{q}-1}}\right)^{\frac{p/q-1}{p/q}}\\
&=\left(\underset{C\sim\mu_{1/2}}{\E}\left[\frac{Z(C)^\frac{p}{q}}{2^{p|C|}}\right]\right)^{\frac{q}{p}}\cdot \left(\sum_{C\subseteq[n]}\left[\left(\frac{1}{3}\right)^{\frac{\frac{p}{q}}{\frac{p}{q}-1}}2^{\frac{p+1}{\frac{p}{q}-1}}\right]^{|C|}\left[\left(\frac{2}{3}\right)^{\frac{\frac{p}{q}}{\frac{p}{q}-1}}2^{\frac{1}{\frac{p}{q}-1}}\right]^{n-|C|}\right)^{\frac{p/q-1}{p/q}}\\
&=\left(\underset{C\sim\mu_{1/2}}{\E}\left[\frac{Z(C)^\frac{p}{q}}{2^{p|C|}}\right]\right)^{\frac{q}{p}}\cdot \left(\left(\frac{1}{3}\right)^{\frac{\frac{p}{q}}{\frac{p}{q}-1}}2^{\frac{p+1}{\frac{p}{q}-1}}+\left(\frac{2}{3}\right)^{\frac{\frac{p}{q}}{\frac{p}{q}-1}}2^{\frac{1}{\frac{p}{q}-1}}\right)^{n\frac{p/q-1}{p/q}}\\
&=\left(\underset{C\sim\mu_{1/2}}{\E}\left[\frac{Z(C)^\frac{p}{q}}{2^{p|C|}}\right]\right)^{\frac{q}{p}}\cdot \left[\left(\frac{1}{3}\right)^{\frac{\frac{p}{q}}{\frac{p}{q}-1}}2^{\frac{\frac{p}{q}+1}{\frac{p}{q}-1}}\left(1+2^{\frac{p(q-1)}{p-q}}\right)\right]^{n\frac{p/q-1}{p/q}}.
    \end{split}
\end{equation*}
Thus
\begin{equation}\label{eq:holder-bound}
\underset{C\sim\mu_{1/2}}{\E}\left[\frac{Z(C)^\frac{p}{q}}{2^{p|C|}}\right]
\geq
\left[3^{\frac{p}{q}}2^{-\frac{p}{q}-1}
\left(1+2^{\frac{p(q-1)}{p-q}}\right)^{-(\frac{p}{q}-1)}\right]^n.
\end{equation}
Substituting \eqref{eq:holder-bound} into \eqref{eq:cs-bound} gives
$$
(ab)^{1-\frac{p}{q}}\left(\frac34\right)^{2n\frac{p}{q}}\ge \left[3^{\frac{p}{q}}2^{-\frac{p}{q}-1}\left(1+2^{\frac{p(q-1)}{p-q}}\right)^{-(\frac{p}{q}-1)}\right]^{2n},
$$
which implies that
$$
(|\A|\cdot|\B|)^{\frac{p}{q}-1}\le\left[2^{1-\frac{p}{q}}\left(1+2^{\frac{p(q-1)}{p-q}}\right)^{\frac{p}{q}-1}\right]^{2n}.
$$
Thus
$$
|\A|\cdot|\B|\le \left(\frac{1+2^{\frac{p(q-1)}{p-q}}}{2}\right)^{2n}=\left(\frac{1+2^{\frac{\gamma(1+\varepsilon)}{1-\gamma}}}{2}\right)^{2n}.
$$
First let $\varepsilon\downarrow0$ and then $\gamma\downarrow1/2$. Then the right-hand side converges to $(\frac{9}{4})^n$.
\end{proof}

\section{Concluding remarks}\label{sec:remarks}

We compare the entropy proof of Fang and Huang with our hypercontractive argument.

The two proofs start from the same positive kernel. After Sinkhorn scaling, set
$$
\mathbb{P}[X=A,Y=B]=r_As_B3^{-|A\cup B|}.
$$
The row and column normalizations make $X$ and $Y$ uniform on $\mathcal A$ and $\mathcal B$. In~\cite{FangHuang2026}, one sets $U=X\cup Y$ and forms $M\supseteq U$ by adding each element outside $U$ with probability $1/3$. The identity
$$
\mathbb P[M=C,X=A,Y=B]=3^{-n}2^{n-|C|}r_As_B\mathbbm{1}_{\{A\subseteq C\}}
\mathbbm{1}_{\{B\subseteq C\}}
$$
shows that $X$ and $Y$ are conditionally independent given $M$. In the present proof, the same factorization appears in the formula for $\T h_A(C)$. With reference input law $\mu_{1/4}$ for $U$, the operator $\T^{1/4\to1/2}$ averages with respect to the conditional law of $U$ given $M$, that is, the Bayes reverse of the channel $U\mapsto M$.

Theorem~\ref{thm:hyper} also recovers the one-bit entropy inequality used in~\cite{FangHuang2026}. Write $D(P\|Q)$ for relative entropy and $h(x)=-x\log_2x-(1-x)\log_2(1-x)$. Let $P_U$ be a Bernoulli law with $P_U(0)=x$, and let $P_M$ be its image under the channel
$$
\mathbb P[M=1\mid U=1]=1,
\qquad
\mathbb P[M=1\mid U=0]=\frac13.
$$
Then $P_M(0)=\frac{2x}{3}$. Differentiating Theorem~\ref{thm:hyper} at $p=q=1$ and letting $\gamma\downarrow\frac{1}{2}$ gives the relative-entropy contraction
\begin{equation}
 D(P_M\|\mu_{1/2})
\leq\frac12D(P_U\|\mu_{1/4}).
\end{equation}
With logarithms to base $2$, this is equivalent to
\begin{equation}
h(x)+x\log_2 3\leq2h\left(\frac{2x}{3}\right).
\end{equation}
This is the auxiliary inequality used in the proof of~\cite[Lemma 2.2]{FangHuang2026}. The passage from near-$L^1$ hypercontractivity to relative-entropy contraction is part of the standard connection between hypercontractivity and strong data processing; see~\cite{AhlswedeGacs1976,AnantharamEtAl2013,BeigiGohari2018}.

\paragraph{Acknowledgments.}
During the exploratory stage, we used GPT-5.6 Pro to help understand the paper of Fang and Huang. GPT-5.6 Pro suggested an initial version of Theorem~\ref{thm:hyper} in the language of reverse channels. We then observed that this formulation is directly related to the one-sided noise operator defined by Lifshitz, which led to the present formulation of Theorem~\ref{thm:hyper}. We also used GPT-5.6 to assist with polishing the exposition. All other mathematical content in this paper is due to the author.

\end{document}